\let\csname equation*\endcsname\relax
\let\csname endequation*\endcsname\relax
\newtheorem{theorem}{Theorem}[section]
\newtheorem{lemma}[theorem]{Lemma}
\theoremstyle{definition}
\newtheorem{definition}[theorem]{Definition}
\newtheorem{remark}[theorem]{Remark}
\newtheorem{corollary}[theorem]{Corollary}
\begin{document}

\title{General conservation law for a class of physics field theories}

\author{Lauri Kettunen$^1$, Sanna M\"onk\"ol\"a$^1$, Jouni Parkkonen$^2$, Tuomo Rossi$^1$}

\address{$^1$University of Jyväskylä, Faculty of Information Technology, PO Box 35, FI-40014 University of Jyvaskyla, Finland,\vspace*{6pt}\\ 
$^2$University of Jyväskylä, Department of Mathematics and Statistics, PO Box 35, FI-40014 University of Jyvaskyla, Finland}
\ead{lauri.y.o.kettunen@jyu.fi}
\vspace{10pt}
\begin{indented}
\item[]August 2019
\end{indented}

\begin{abstract}
In this paper we form a general conservation law that unifies a class of physics field theories. For this we first introduce the notion of a general field as a formal sum differential forms on a Minkowski manifold. Thereafter, we employ the action principle to define the conservation law for such general fields. By construction, particular field notions of physics, such as electric field strength, stress, strain etc. become instances of the general field. Hence, the differential equations that constitute physics field theories become also instances of the general conservation law. Accordingly, the general field and the general conservation law together correspond to a large class of physics field models. The approach creates solid foundations for multi-physics analysis and is critical in developing software systems for scientific computing; the unifying structure shared by the class of field models makes it possible to implement software systems which are not restricted to finite lists of admissible problems.
\end{abstract}

%
% Uncomment for keywords
\vspace{2pc}
\noindent{\it Keywords}: classical field theories, gauge theory, action principle, differential forms, Minkowski space, electromagnetism, elasticity, Schr\"odinger  equation, Yang-Mills theory

% Uncomment for Submitted to journal title message
%\submitto{\jpa}
%
% Uncomment if a separate title page is required
%\maketitle
% 
% For two-column output uncomment the next line and choose [10pt] rather than [12pt] in the \documentclass declaration
%\ioptwocol
%

\section{Introduction}

Field theories are central in physics and much of the progress in engineering is achieved by solving corresponding boundary value problems. While the physics field theories are specialized and compartimentalized, the contemporary needs call for a unifying view. In multi-physical software design a coherent view that makes it possible to embed several physics field theories into the same system is a clear advantage in coming up with consistent and concise software systems. 

The aim of this paper is to introduce a general conservation law that covers several branches of physics. For this, instead of starting from any particular field theory, such as electromagnetism or elasticity, we first collect a formal sum of the field types that appear in different theories. Thereafter we present in one token the conservation law for the formal sum exploiting the action principle \cite{Feynman} \cite{Bleecker} \cite{Baez-Muniain}. 

The formal sum and the corresponding conservation law are called the general field and the general conservation law, respectively. They stem from a class of field theories that include in their core i) a pair of fields, ii) a pair of differential equations, one for each field, iii) a constitutive relation between the fields, and iv) a real valued action, which is either minimized, or in a more general sense, which does not change to first order by small variations in the solutions of the differential equations. Such a structure arises from the basic principles shared by many physics field theories in making observations in space-time and in formalizing such observations into algebra. 

When talking about analogies between field theories, elementary physics textbooks already implicitly recognize a common structure behind field theories. However, while analogies may be helpful in building an intuitive understanding of physics, they are not sufficient for the demands of modern needs. Instead, the common structure should be revealed and expressed in proper mathematical terms. 

In this paper we employ differential geometry and exterior algebra to explain the findings and results. While many alternatives exist, this language yields a close link to the existing literature on solution methods of physics boundary value problems. 

The paper is organized as follows. First we will define the notion of a general field in section \ref{sec: general field}. In section \ref{sec:Conservation law} after introducing the basic tools we define the general conservation law in space-time. Thereafter the same is given explicitly in $(1+3)$-dimensions in time and space in theorem \ref{thr:Conservation law in space and time} and corollary \ref{crl:Conservation law in space and time II}.  These theorems are the main results of this paper.  For convenience, the main result is also translated into terms of classical vector analysis in remark \ref{rmk:metric proxy}. In section \ref{sec:Physics models} we derive particular physics field models from corollary \ref{crl:Conservation law in space and time II} to demonstrate the usage of the main result with several examples. The main principles of approximating the general conservation law in finite dimensional spaces for the needs of numerical computing are briefly explained in section \ref{sec:Approximations}. Finally, the conclusions follow in section \ref{sec:conclusions}.

\section{General field as formal sum of $p$-forms}
\label{sec: general field}

To get started we assume space-time is modeled as an $n$-dimensional smooth and oriented Minkowski manifold $\Omega$ equipped with a metric tensor and with the signature $(-,+,+,\dots ,+)$.  Furthermore, at will, we may assume that $\Omega$ is topologically trivial, as the conservation law describes field properties locally in the virtual neighbourhoods of points of $\Omega$, and hence, the conservation law does not depend on the topology. 

The space-like part of $\Omega$ is a Riemannian manifold  \cite{Frankel} \cite{Petersen} denoted by $\Omega_s$, and the boundary of $\Omega_s$ is $\partial\Omega_s$.  A bounded domain in the Euclidean 3-space is a simple and common example of $\Omega_s$.  The time-like part of $\Omega$ is $\Omega_t$. When $\Omega$ is decomposed into space and time, we write $\Omega=\Omega_t\times\Omega_s$. 

Many physics field theories cannot be covered only with ordinary differential forms, and therefore, we introduce also so called $E$-valued and ${\rm End}(E)$-valued forms \cite{Frolicher-Nijenhuis} \cite{Baez-Muniain}. For this, we denote the tangent and cotangent bundles over $\Omega$ by $T\Omega$ and $T^\ast\Omega$. Then, suppose $E$ is a smooth real vector bundle over $\Omega$ and $\bigwedge\nolimits^p T^\ast\Omega$ is a $p$-form bundle over $\Omega$. An $E$-valued differential form of degree $p$ is a section of $E\otimes\bigwedge\nolimits^p T^\ast\Omega$ \cite{Frolicher-Nijenhuis} \cite{Baez-Muniain} \cite{Frankel}. 

Linear maps from a vector space back to itself are mathematically known as endomorphisms. Let $E^*$ be the dual space of $E$.  The endomorphism bundle ${\rm End}(E)$ of $E$ over $\Omega$ is isomorphic to $E\otimes E^*$ \cite[p. 221]{Baez-Muniain}. For the needs of this paper it is sufficient to interpret ${\rm End}(E)$ as $E\otimes E^*$. An ${\rm End}(E)$-valued differential form of degree $p$ is a section of ${\rm End}(E)\otimes\bigwedge\nolimits^p T^\ast\Omega$ \cite{Baez-Muniain}.

To exemplify $E$ and ${\rm End}(E)$-valued forms, let us express them in local coordinates. For this, let $f$ be an ordinary $p$-form, $s$ a section of tangent bundle $E=T\Omega$, and $\Omega$ a simple bounded domain in Euclidean $3$-space. The wedge product between differential forms is denoted by $\wedge$. In this case a section corresponds with a vector field and in local coordinates $s$ can be written as $s=s^x e_x +s^y e_y +s^y e_y$. A $1$, $2$, or $3$-form $f$ can be given by $f=f_x{\rm d}x +f_y{\rm d}y + f_z{\rm d}z$, $f=f_{yz}{\rm d}y\wedge{\rm d}z +f_{zx}{\rm d}z\wedge{\rm d}x+ f_{xy}{\rm d}x\wedge{\rm d}y$, or $f=f_{xyz}{\rm d}x\wedge{\rm d}y\wedge{\rm d}z$, respectively. In this case the $E$-valued $p$-form is a so-called vector valued $p$-form
\begin{displaymath}
s\otimes f=[s^x f, s^yf, s^zf]^T. 
\end{displaymath}
When $s\otimes f$ operates on a $p$-vector (or on a $p$-tuple of vectors $w$), $f(w)$ becomes a real, and the value of the map is vector
\begin{displaymath}
s\otimes f(w) = [f(w)s^x , f(w)s^y, f(w)s^z]^T\in\mathbb{R}^3.
\end{displaymath}
This motivates the name "vector-valued $p$-forms". 

\begin{remark}
Not all tensors are simple, and consequently, all $E$-valued differential forms cannot be given in the form $s\otimes f$. However, they can still be written as a sum of such forms.
\end{remark}

The local expression of $E^*$-valued, that is, of co-vector valued $p$-forms is constructed similarly. Let $s^*$ be a section of $E^*$. In Euclidean spaces $s^*$ is then just a co-vector field. In local coordinates $s^*$ is given by $s^* = s^*_x{\rm d}x + s^*_y{\rm d}y + s^*_z{\rm d}z$. Now, a co-vector valued $p$-form $s^*{\otimes f}$ is
\begin{displaymath}
s^*{\otimes f} = s_x^*\,{\rm d}x \otimes f+ s_y^*\,{\rm d}y\otimes f + s_z^*\,{\rm d}z\otimes f .
\end{displaymath}
When $s^*\otimes f$ operates on a $p$-vector (or on a $p$-tuple of vectors) $w$, $f(w)$ becomes a real number, and the tensor products with $f(w)$ reduce to multiplying with a real number, and consequently, the value of the map becomes a co-vector
\begin{displaymath}
s^*{\otimes f(w)} = f(w)\,s_x^*{\rm d}x  + f(w)\,s_y^*{\rm d}y + f(w)\,s_z^*{\rm d}z = f(w)s^*
\end{displaymath}
justifying the name "co-vector valued $p$-form".

Finally, for ${\rm End}(E)$-valued forms we first interpret ${\rm End}(E)$ as $E\otimes E^*$, and then $s\otimes s^*$ can be understood as a section of ${\rm End}(E)$. In local coordinates, $(s\otimes s^*)\otimes f$ is written
\begin{displaymath}
(s\otimes s^*)\otimes f =
\begin{bmatrix}
s^xs^*_x \, e_x\otimes {\rm d}x & s^x s^*_y\, e_x\otimes{\rm d}y &  s^xs^*_z\, e_x\otimes {\rm d}z\\
s^ys^*_x\, e_y\otimes {\rm d}x & s^ys^*_y\, e_y\otimes {\rm d}y &  s^ys^*_z\, e_y\otimes {\rm d}z\\
s^zs^*_x\, e_z\otimes {\rm d}x & s^zs^*_y\, e_z\otimes {\rm d}y &  s^zs^*_z\, e_z\otimes {\rm d}z
\end{bmatrix}
\otimes f,
\end{displaymath}
and when $f$ operates on on a $p$-vector or on a $p$-tuple of vectors $w$, the value is
\begin{displaymath}
(s\otimes s^*)\otimes f(w) = f(w)\,(s\otimes s^*),
\end{displaymath}
and hence the name "${\rm End}(E)$-valued $p$-form".

Now, the general field is defined by
\begin{definition}
$F$ is a general field, if it is a section of
\begin{displaymath}
\bigoplus_{p=0}^{n}\bigwedge\nolimits^p T^\ast\Omega,\quad\quad 
\bigoplus_{p=0}^{n}\left(E^p\otimes\bigwedge\nolimits^p T^\ast\Omega\right),\quad\text{or}\quad
\bigoplus_{p=0}^{n}\left({\rm End}(E)\otimes\bigwedge\nolimits^p T^\ast\Omega\right),
 \end{displaymath}
 where $E^p$ is either $E$ or $E^*$.
\end{definition}
\begin{remark}
The general field of ordinary differential forms is of the type $F=\alpha_0 f^0+\alpha_1 f^1+\dots+\alpha_n f^n$, in case of $E$-valued forms $F=\alpha_0 s_0\otimes f^0 + \alpha_1 s_1\otimes f^1+\dots , \alpha_n s_n\otimes f^n$ and so on, where $\alpha_p\in\mathbb{R}$, $p=0,\dots ,n$, and the $s_p$'s are sections of $E$. Consequently, it is plain that any ordinary, $E$-valued, or ${\rm End}(E)$-valued differential form is an instance of the general field.
\end{remark}
  
This construction guarantees that any field from physics field theories expressible as a differential form can also be given as an instance of $F$. Consequently, by operating with general fields we can cover in one token a large class of fields from physics field theories without focusing on any particular one.

\section{Conservation law for general fields}
\label{sec:Conservation law}
In this section we will next introduce the general conservation law as the first order differential equation the general field $F$ should fulfil. For this, we will need some tools.

At first we denote the exterior derivative and the covariant exterior derivative by ${\rm d}$ and ${\rm d}_\nabla$, respectively, where $\nabla$ is a connection \cite{Bleecker} \cite{Crampin-Pirani} \cite{Frankel}  \cite{Baez-Muniain}. 

\subsection{Hodge operator and wedge product} The Hodge operator \cite{Hodge1941} \cite{Flanders} \cite{Bleecker} \cite{Frankel} mapping ordinary $p$-forms to $(n-p)$-forms is denoted by $\star$. The $\star$ depends on the metric tensor. 

Next, the $\star$ and the $\wedge$ need to be extended to $E$-valued and ${\rm End}(E)$-valued forms. For this, we assume a metric on $E$. Then, let $\sigma$ be an ordinary $p$-form and $f=s\otimes \sigma$ an $E$-valued $p$-form.  Furthermore, we denote the set of sections of bundle $E$ by $\Gamma(E)$, and $\sharp:\Gamma(E^*)\rightarrow\Gamma(E)$ is the sharp map, and $\flat:\Gamma(E)\rightarrow\Gamma(E^*)$ is the flat map. 

For our needs the Hodge operator should be defined such that the integral of product $f\wedge {\star f}$ over $\Omega$ becomes a real valued action. For this reason we adopt the following definitions:
\begin{definition}
The $\star$ is an extension of the Hodge operator, if it operates on $E$-valued and ${\rm End}(E)$-valued forms as follows
\begin{alignat*}{1}
\star:\,\, &\Gamma(E\otimes \bigwedge\nolimits^p T^\ast\Omega)\,\, \rightarrow\,\, \Gamma (E^*\otimes \bigwedge\nolimits^{n-p} T^\ast\Omega), \,s\otimes \sigma\mapsto \flat s\otimes{\star{\sigma}}, \\
\star:\,\,&\Gamma(E^*\otimes \bigwedge\nolimits^p T^\ast\Omega)\,\, \rightarrow\,\, \Gamma (E\otimes \bigwedge\nolimits^{n-p} T^\ast\Omega), \, s\otimes \sigma\mapsto \sharp s\otimes{\star{\sigma}}, \\
\star:\,\,&\Gamma({\rm End}(E)\otimes \bigwedge\nolimits^p T^\ast\Omega) \rightarrow \Gamma ({\rm End}(E)\otimes \bigwedge\nolimits^{n-p} T^\ast\Omega), \, S\otimes\sigma\mapsto S\otimes{\star{\sigma}}.
\end{alignat*}
\end{definition}

\begin{definition}
The $\wedge$ is an extension of the wedge product between ordinary differential forms, if it operates on $E$-valued and ${\rm End}(E)$-valued forms as follows
\begin{alignat*}{1}
\wedge: \, \,& \Gamma(E\otimes \bigwedge\nolimits^p T^\ast\Omega)\,\,\times\,\,\Gamma(E^*\otimes \bigwedge\nolimits^r T^\ast\Omega)\,\,\rightarrow\,\,\Gamma((E\otimes E^*)\otimes \bigwedge\nolimits^{p+r} T^\ast\Omega),\\
& (s\otimes \sigma,s'\otimes \sigma')\mapsto (s\otimes s')\otimes (\sigma\wedge \sigma')
\end{alignat*}
and
\begin{alignat*}{1}
\wedge: \,\,& \Gamma({\rm End}(E)\otimes \bigwedge\nolimits^p T^\ast\Omega)\,\,\times\,\,\Gamma({\rm End}(E)\otimes \bigwedge\nolimits^r T^\ast\Omega)\,\,\rightarrow\,\,\Gamma({\rm End}(E)\otimes \bigwedge\nolimits^{p+r} T^\ast\Omega),\\
& (S\otimes \sigma, S'\otimes \sigma')\mapsto SS'\otimes (\sigma\wedge \sigma').
\end{alignat*}
\end{definition}

\begin{remark} 
The wedge product of  $s\otimes\sigma$ and ${\star(s\otimes\sigma)}$ yields an $E\otimes E^*$-valued form $(s\otimes \flat s)\otimes (\sigma\wedge\star \sigma)$.
\end{remark}
\begin{remark} 
The wedge product of $S\otimes \sigma$ and $\star(S\otimes \sigma)$ is an ${\rm End}(E)$-valued form
\begin{displaymath}
S\otimes \sigma\wedge\star(S\otimes \sigma) = SS\otimes(\sigma\wedge{\star\sigma})\, .
\end{displaymath}
\end{remark}

\subsection{Action} Hereinafter we need not to emphasize explicitly whether we talk of ordinary, $E$-valued, or ${\rm End}(E)$-valued $p$-forms. Consequently, symbols $f^p$, $g^p$, $h^p$, etc. may denote to $p$-forms of any type.

Physics field theories are typically built around the idea of conserving some fundamental notion, such as energy or probability, and the very idea is to express such notion as a $L^2$-norm of a product of fields integrated over the whole manifold. To introduce such type of real valued action, we define first:
\begin{definition}
Given a vector space $V$, a linear map $V\otimes V^*\rightarrow\mathbb{R}$ is the trace ${\rm tr}$, if $v\otimes \nu\mapsto \nu(v)$. 
\end{definition}
\begin{remark}
The trace is independent of the choice of basis.
\end{remark}

With the aid of the trace we are able to introduce function $\mathcal{A}$ mapping $f^p$'s to real numbers:
\begin{displaymath}
{\mathcal A}(f^p)\, = \, \frac{1}{2}\int\limits_\Omega {\rm tr}(f^p\wedge{\star f^p}).
\end{displaymath}
This provides us with an action of the desired type; the integral of the product between the elements of pair $\{f^p, g^{n-p}\}$ over $\Omega$, where the elements $f^p$ and $g^{n-p}$ are related by a constitutive relation $g^{n-p}={\star f^p}$, yields a real number. 

Physics field theories call also for source terms. To append them to action ${\mathcal A}$, we need to add another term. Let the sources be given as a general field $G$:
\begin{displaymath}
G=\sum\limits_{p=0}^n g^p.
\end{displaymath}
Complementing the source terms to action ${\mathcal A}$ results in
\begin{displaymath}
{\mathcal A}^p\, =\, \frac{1}{2}\int\limits_\Omega {\rm tr}(f^p\wedge{\star f^p}) + \int\limits_\Omega {\rm tr}(a^{p-1}\wedge {\star g^{p+1}}),
\end{displaymath}
where $a^p$ is the potential. Notice, however, the potential does not have the same interpretation in classical field theories and in gauge theories. For example, in classical theories one typically writes $f^p={\rm d}_\nabla a^{p-1}$, whereas in case of Yang-Mills theory \cite{Yang-Mills1954} \cite{Yang2014} \cite{Baez-Muniain} \cite{Tao2004} the ${\rm End}(E)$-valued curvature $2$-form $f^2$ is $f^2=f_0 +{\rm d}_\nabla a^1+ a^1\wedge a^1$, where $a^1$ is the vector potential and $f_0$ is the curvature of the connection, see \cite[p. 274-5]{Baez-Muniain}, \cite{Tao2004}. 
\begin{remark}
Notice, the wedge product between an ${\rm End}(E)$-valued with itself need not to vanish. This is due the non-commutativity between the two products involved. For more detail, see \cite[p.280-281]{Crampin-Pirani}.
\end{remark}

Insisting the variation $\delta{\mathcal A}^p$ to vanish yields differential equations. This is the so called action principle \cite{Feynman} \cite{Baez-Muniain} \cite{Sanders2014}. In case of  ${\mathcal A}^p$ potential $a^p$ is varied by $a^p_\alpha = a^p + \alpha \delta a^p$, $\alpha \in\mathbb{R}$, and then the variation $\delta{\mathcal A}^p$ is given by 
\begin{displaymath}
\delta {\mathcal A}^p \, = \,\frac{\rm d}{{\rm d}\alpha } {\mathcal A}^p(a^{p-1}_\alpha )\bigg\rvert_{\alpha =0}\, .
\end{displaymath}
Thereafter, the differential equations are obtained with an integration by parts process. For details and examples see \cite{Baez-Muniain} \cite{Bleecker} \cite{Frankel}. All the examples we will show later on in section \ref{sec:Physics models} rely on the action ${\mathcal A}^p$ given above.

\subsection{General conservation law in space-time} We have now all what is needed to define the class of field theoretical problems we are interested in: (if the connection is trivial, ${\rm d}_\nabla$ reduces to ${\rm d}$.)
\begin{definition}
On manifold $\Omega$ differential equations
\begin{displaymath}
{\rm d}_\nabla F \, =\, 0\,  \quad\text{and}\quad {\star{\rm d}_\nabla{\star F}}\, = \, {\star{\star G}}
\end{displaymath}
form the general conservation law for pair $\{F,G\}$, if the equations are derivatives from a real valued action.
\end{definition}
\begin{remark}
The name conservation law follows from the idea that small changes in the solutions of the differential equations do not change the underlying action up to the first order. In this sense the differential equations have to do with conservation of energy, probability or of some other significant notion. 
\end{remark}
\begin{remark}
Operator ${\star{\star}}$ specifies the sign, ${\star{\star}} = (-1)^{p(n-p)+1}$. 
\end{remark}

\begin{lemma}
\label{lm: Conservation law in 4d}
In dimension $n=4$ with the given signature the Hodge operator $\star$ maps $p$-forms to $(4-p)$-forms, and it fulfils $\star^2=(-1)^{p(4-p)+1} $. Consequently, the general conservation law for pair $\{F,G\}$, $F=\sum_{p=0}^4 f^p$, and $G=\sum_{p=0}^4 g^p$, can be equivalently given by
\begin{displaymath}
\setlength{\arraycolsep}{1pt}
\begin{bmatrix}
& -{\star{\rm d}_\nabla{\star}} & & \\
{\rm d}_\nabla & &{\star{\rm d}_\nabla{\star}} &  \\
& {\rm d}_\nabla& & -{\star{\rm d}_\nabla{\star}} \\
& & {\rm d}_\nabla  & & {\star{\rm d}_\nabla{\star}} \\
& & & {\rm d}_\nabla  & \\
\end{bmatrix}
\begin{bmatrix}
f^0 \\
f^1 \\
f^2 \\
f^3 \\
f^4 \\
\end{bmatrix}
 \, = \,
\begin{bmatrix}
g^0 \\
g^1 \\
g^2 \\
g^3 \\
g^4 \\
\end{bmatrix} \, .
\vspace*{3.0mm}\\
\end{displaymath}
\end{lemma}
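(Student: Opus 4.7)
The plan has two halves: verifying the identity $\star^2 = (-1)^{p(4-p)+1}$ on $p$-forms, and unpacking the two conservation-law equations by degree to reassemble them in the displayed matrix form.

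For the first half, I would appeal to the standard fact that on a pseudo-Riemannian manifold of dimension $n$ with $s$ negative entries in the signature, $\star^2 = (-1)^{p(n-p)+s}$ on $p$-forms. With $n = 4$ and $s = 1$ this gives exactly $(-1)^{p(4-p)+1}$. The extensions of $\star$ defined earlier pass the Hodge star straight through the fibre factor (with sharp and flat being mutually inverse on the $E$-valued case, and with no fibre change at all on the ${\rm End}(E)$-valued case), so the same $\star^2$ identity applies uniformly to all three flavours of general field.

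For the second half, I would decompose both equations by grading. Writing $F = \sum_{p=0}^{4} f^p$, the equation $d_\nabla F = 0$ splits into the sub-equations $d_\nabla f^{k-1} = 0$ for $k \in \{1, 2, 3, 4\}$, since $d_\nabla$ raises degree by one and $d_\nabla f^4$ vanishes trivially. Similarly, because $\star d_\nabla \star$ lowers degree by one, $\star d_\nabla \star F = \star\star G$ splits into $\star d_\nabla \star f^{k+1} = (\star\star)\, g^k$ for $k \in \{0, 1, 2, 3\}$, with the right-hand coefficient $(-1)^{k(4-k)+1}$ supplied by the first step. Bringing this sign over to the left produces the alternating pattern $-, +, -, +$ on the $\star d_\nabla \star$ entries in rows $k = 0, 1, 2, 3$, while the $d_\nabla$ entries in rows $k = 1, 2, 3, 4$ carry no sign. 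Arranging these sub-equations in a $5 \times 5$ pentadiagonal matrix with rows indexed by the output degree $k$ and columns by the source degree $p$ of $f^p$ reproduces the display exactly.

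The only real delicacy is the sign bookkeeping in the second step; everything else is linearity and grading. As a spot check I would read off the extremal rows directly: $k = 0$ gives $-\star d_\nabla \star f^1 = g^0$ and $k = 4$ gives $d_\nabla f^3 = g^4$, matching the top and bottom of the matrix, and an intermediate row such as $k = 2$ gives $d_\nabla f^1 - \star d_\nabla \star f^3 = g^2$ with the expected minus sign.
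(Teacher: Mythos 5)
Your proposal is correct. The paper in fact states this lemma without any proof at all, so your argument supplies exactly the missing details: the standard identity $\star^2=(-1)^{p(n-p)+s}$ with $n=4$, $s=1$ (passing through the fibre factor unchanged because $\sharp$ and $\flat$ are mutually inverse), followed by the degree-by-degree splitting of ${\rm d}_\nabla F=0$ and $\star{\rm d}_\nabla{\star F}=\star\star G$, and your sign bookkeeping $(-1)^{k(4-k)+1}=-,+,-,+$ for $k=0,1,2,3$ matches the alternating entries of the displayed matrix. The only caveat worth recording is one inherited from the paper rather than introduced by you: each middle row of the matrix adds the degree-$k$ pieces of the two equations into a single equation, so the matrix form is strictly implied by, rather than literally equivalent to, the pair of equations in the definition; if you want genuine equivalence you should say that the system is to be read as the two graded families of sub-equations, not as their sums.
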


\subsection{General conservation law in space and time} Let us next assume a decomposition of $\Omega$ into space and time, $\Omega=\Omega_s\times\Omega_t$. The space-like part of the exterior covariant derivative ${\rm d}_\nabla$ is denoted by ${\rm d}^s_\nabla$. 

Here we state our main result as theorem \ref{thr:Conservation law in space and time} and corollary \ref{crl:Conservation law in space and time II} following from lemma \ref{lm: Conservation law in 4d}:
\begin{theorem}
\label{thr:Conservation law in space and time}
In $\Omega_t\times\Omega_s$ and in dimension $n=1+3$ the conservation law is given by
\begin{displaymath}
\begin{footnotesize}
\renewcommand{\arraystretch}{1.65}
\setlength{\arraycolsep}{1.5pt}
\begin{bmatrix}
{{\rm d}t}\wedge\partial_t &  & & & &  {\rm d}_{\nabla}^s& & \\ 
&{\star{({\rm d}t\wedge{\partial_t})}\star}& & &{\rm d}_{\nabla}^s &  & & \\ 
& &{{\rm d}t}\wedge\partial_t & &  & -{\star{{\rm d}_\nabla^s\star}}& &{\rm d}_{\nabla}^s     \\ 
&  &  &{\star{({\rm d}t\wedge{\partial_t})}\star}&  {\star{{\rm d}_\nabla^s\star}} & & {\rm d}_{\nabla}^s  & \\ 
&  {\star{{\rm d}_\nabla^s\star}} & & {\rm d}_{\nabla}^s & {{\rm d}t}\wedge\partial_t& & & \\ 
-{\star{{\rm d}_\nabla^s\star}} & & {\rm d}_{\nabla}^s & & &-{\star{({\rm d}t\wedge{\partial_t})}\star}& & \\ 
& &  &{\star{{\rm d}_\nabla^s\star}}& & & {{\rm d}t}\wedge\partial_t &\\ 
& & -{\star{{\rm d}_\nabla^s\star}}& & & & & -{\star{({\rm d}t\wedge{\partial_t})}\star}
\end{bmatrix}
\begin{bmatrix}
      f^{3}_s \\ 
      f^{4} _t\\ 
      f^{1}_s \\ 
      f^{2}_t \\ 
      f^{2} _s\\ 
      f^{3}_t \\ 
     f^0\\ 
     f^1_t
\end{bmatrix}
\, = \, 
\begin{bmatrix}
    g^{4}_t \\ 
    g^{3}_s \\ 
    g^{2}_t \\ 
    g^{1}_s \\ 
    g^{3}_t \\ 
    g^{2}_s  \\ 
    g^1_t \\ 
    g^0
\end{bmatrix}
\end{footnotesize}
\end{displaymath}
\end{theorem}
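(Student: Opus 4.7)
The plan is to obtain the $8\times 8$ system by substituting the canonical space/time decomposition of spacetime forms and operators into the $5\times 5$ system of Lemma~\ref{lm: Conservation law in 4d} and collecting like terms. Since $\Omega_s$ is three-dimensional, every spacetime $p$-form $f^p$ splits uniquely as $f^p = f^p_s + f^p_t$, where $f^p_s$ is a pure spatial $p$-form and $f^p_t = \mathrm{d}t\wedge\tilde{f}^{\,p-1}_s$ carries the time differential; the corner cases are that $f^0$ has only an $f^0_s$ component and $f^4$ has only an $f^4_t$ component. Counting the surviving pieces across $p=0,\ldots,4$ gives precisely the eight columns $f^3_s,\,f^4_t,\,f^1_s,\,f^2_t,\,f^2_s,\,f^3_t,\,f^0,\,f^1_t$ appearing in the theorem, and likewise for the sources $g^p$.

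Next I would decompose the differential operators consistently with the split. The covariant exterior derivative separates as
\[
\mathrm{d}_\nabla \,=\, \mathrm{d}t \wedge \partial_t \,+\, \mathrm{d}^s_\nabla,
\]
where $\mathrm{d}^s_\nabla$ acts only on spatial indices. In the Minkowski metric with signature $(-,+,+,+)$ the spacetime Hodge star is related to its spatial counterpart $\star^{\!s}$ by $\star(\mathrm{d}t\wedge\alpha)=\pm\star^{\!s}\alpha$ and $\star\alpha=\pm\,\mathrm{d}t\wedge\star^{\!s}\alpha$ for a spatial form $\alpha$, with signs determined by its degree. Using these I would rewrite the codifferential block $\star\mathrm{d}_\nabla\star$ as the sum of a purely temporal piece, which is what the theorem writes as $\star(\mathrm{d}t\wedge\partial_t)\star$, and a purely spatial piece $\star\mathrm{d}^s_\nabla\star$. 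After substituting these operator decompositions into each of the five rows of the matrix in Lemma~\ref{lm: Conservation law in 4d}, every equation separates into a pure spatial part and a pure $\mathrm{d}t$-part, which live in independent subspaces of $\bigwedge^\bullet T^\ast\Omega$. Equating the two parts to the corresponding pieces of the source produces the eight scalar equations, whose coefficients rearrange into the stated $8\times 8$ block matrix.

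The main obstacle will be sign bookkeeping. Moving $\mathrm{d}t$ past a spatial $q$-form contributes $(-1)^q$; the signature contributes an extra minus sign from the timelike direction; and the factors $\star^2=(-1)^{p(4-p)+1}$ from Lemma~\ref{lm: Conservation law in 4d} must combine consistently with both to reproduce exactly the particular pattern of $+$ and $-$ signs carried by the four $\star\mathrm{d}^s_\nabla\star$ entries of the final matrix. I would therefore verify the signs degree by degree, starting from the corners $f^0$ and $f^4_t$ where only one summand of each decomposition survives, and working outward; once those are pinned down, the remaining signs are forced and only a single overall orientation/convention choice remains to check. Everything else in the argument is mechanical rearrangement of the decomposed equations into the stated matrix form.
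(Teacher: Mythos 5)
Your plan is correct and follows essentially the same route as the paper: decompose ${\rm d}_\nabla={\rm d}t\wedge\partial_t+{\rm d}^s_\nabla$ and each $f^p$ into $f^p_t+f^p_s$, observe which wedge products with ${\rm d}t$ (and which top/bottom spatial degrees) vanish, substitute into Lemma~\ref{lm: Conservation law in 4d}, and permute rows and columns into the stated $8\times 8$ block form. The sign bookkeeping you flag is exactly the residual work the paper's proof also handles by tracking that $\star f^p_t$ is of spatial type and $\star f^p_s$ of temporal type.
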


\begin{proof}
The theorem follows from the following two properties: 
\begin{enumerate}
\item The covariant exterior derivative can be decomposed into a time and space-like part by writing
\begin{displaymath}
{\rm d}_\nabla  = {\rm d}t\wedge\partial_t  + {\rm d}^s_\nabla \, .
\end{displaymath}  
\item For any $p>0$, any $p$-form $f^p$ can be decomposed into a sum of a time-like and space-like component:
\begin{displaymath}
f^p = f^p_t + f^p_s,
\end{displaymath}
where $f^p_s$ involves only space-like components of $f^p$, and $f^p_t$ contains the remaining components with a time-like part.
\end{enumerate}
These two properties imply
\begin{displaymath}
{\rm d}_\nabla f^p = {\rm d}t\wedge\partial_t  (f^p_t + f^p_s) + {\rm d}^s_\nabla (f^p_t + f^p_s)\,\quad \forall p>0\, .
\end{displaymath}
Since the wedge product between ${\rm d}t$ and any $f^p_t$ vanishes, one may equivalently write
\begin{displaymath}
{\rm d}_\nabla f^p = {\rm d}t\wedge\partial_t  f^p_s + {\rm d}^s_\nabla f^p_t + {\rm d}^s_\nabla f^p_s,\quad \forall p>0\, .
\end{displaymath}
In case of $p=0$ we have
\begin{displaymath}
{\rm d}_\nabla f^0 = {\rm d}t\wedge\partial_t  f^0 + {\rm d}^s_\nabla f^0\, .
\end{displaymath}

Next, a decomposition of ${\star{{\rm d}_\nabla \star}F}$ into a time and space-like parts yields 
\begin{displaymath}
{\star{{\rm d}_\nabla \star}F} \, = \, {\star({\rm d}t\wedge\partial_t  + {\rm d}^s_\nabla){\star}}F \, = \, 
{\star{{\rm d}t\wedge\partial_t\star}}F + \star \, {{\rm d}^s_\nabla{\star}}F.
\end{displaymath}
Consequently, in case of $p=n$ we have
\begin{displaymath}
{\star{{\rm d}_\nabla\star}} f^n =  {\star{\rm d}t}\wedge\partial_t  {\star f^n_t} + {\star{{\rm d}^s_\nabla\star}} f^n_t \, .
\end{displaymath}

For $p<n$ notice first, that any $(n-p)$-form $\star f^p_t$ is of the type $f^{n-p}_s$, and $\star f^p_s$ is of the type $f^{n-p}_t$. Accordingly,
the wedge product between ${\rm d}t$ and any ${\star f^p_s}$ has to vanish, and the derivative is given by
\begin{alignat*}{1}
{\star{{\rm d}_\nabla\star}} f^p & \, = \, {\star{\rm d}t}\wedge\partial_t  {\star(f^p_t + f^p_s)} \, + \, {\star{{\rm d}^s_\nabla\star}} (f^p_t + f^p_s)\\
                                                & \, = \, {\star{\rm d}t}\wedge\partial_t  {\star f^p_t} + {\star{{\rm d}^s_\nabla\star}} f^p_t \, + \,  {\star{{\rm d}^s_\nabla\star}}  f^p_s\,  .
\end{alignat*}

The proof follows now from: $n=4$, $F=\sum_{p=0}^n f^p$,
\begin{alignat*}{1}
{\rm d}_\nabla f^0 & \, = \,  {\rm d}t\wedge\partial_t  f^0 + {\rm d}^s_\nabla f^0\, , \\
{\rm d}_\nabla f^p & \, = \,  {\rm d}t\wedge\partial_t  f^p_s + {\rm d}^s_\nabla f^p_t + {\rm d}^s_\nabla f^p_s,\quad \forall p>0\,, \\
{\star{{\rm d}_\nabla\star}} f^n &\, = \,   {\star{\rm d}t}\wedge\partial_t  {\star f^n_t} + {\star{{\rm d}^s_\nabla\star}} f^n_t \, , \\
{\star{{\rm d}_\nabla\star}} f^p & \, = \,  {\star{\rm d}t}\wedge\partial_t  {\star f^p_t} + {\star{{\rm d}^s_\nabla\star}} f^p_t \, + \,  {\star{{\rm d}^s_\nabla\star}}  f^p_s\, ,\quad \forall p<n\, ,
\end{alignat*}
and furthermore, from ${\rm d}^s_\nabla f^3_s\equiv 0$ and ${\star{{\rm d}^s_\nabla\star}} f^1_t\equiv 0$; Substituting these back to lemma \ref{lm: Conservation law in 4d} yields
\begin{displaymath}
\begin{footnotesize}
\renewcommand{\arraystretch}{1.65}
\setlength{\arraycolsep}{1.5pt}
\begin{bmatrix}
 & -{\star{({\rm d}t\wedge{\partial_t})}\star}& -{\star{{\rm d}_\nabla^s\star}}& & & & &\\
 {{\rm d}t}\wedge\partial_t& & & {\star{{\rm d}_\nabla^s\star}} & & & &\\
 {\rm d}_{\nabla}^s & & &{\star{({\rm d}t\wedge{\partial_t})}\star} & {\star{{\rm d}_\nabla^s\star}} & & &\\
 & {\rm d}_{\nabla}^s & {{\rm d}t}\wedge\partial_t & & & -{\star{{\rm d}_\nabla^s\star}} & \\
 & & {\rm d}_{\nabla}^s & & &-{\star{({\rm d}t\wedge{\partial_t})}\star}& -{\star{{\rm d}_\nabla^s\star}}& \\
 & & & {\rm d}_{\nabla}^s & {{\rm d}t}\wedge\partial_t & & & {\star{{\rm d}_\nabla^s\star}} & \\
 & & & & {\rm d}_{\nabla}^s & & &{\star{({\rm d}t\wedge{\partial_t})}\star}\\
 & & & & & {\rm d}_{\nabla}^s & {{\rm d}t}\wedge\partial_t 
\end{bmatrix}
\begin{bmatrix}
      f^0 \\ 
      f^1 _t\\ 
      f^1 _s\\ 
      f^2 _t\\ 
      f^2_s\\ 
      f^3_t \\ 
      f^3_s\\ 
      f^4 _t
\end{bmatrix}
=
\begin{bmatrix}
      g^0 \\ 
      g^1 _t\\ 
      g^1 _s\\ 
      g^2 _t\\ 
      g^2_s\\ 
      g^3_t \\ 
      g^3_s \\ 
      g^4 _t
\end{bmatrix}
,
\end{footnotesize}
\end{displaymath}
and the desired result is obtained by row and column swapping.
\end{proof}

Let $f^p_s$, $g^p_s$, $F^p_s$, and $G^p_s$ denote $p$-forms on $\Omega_s$, and $\star_s$ is the Hodge operator in the space-like component $\Omega_s$.

\begin{corollary}
\label{crl:Conservation law in space and time II}
Theorem \ref{thr:Conservation law in space and time} is equivalent to
\begin{displaymath}
\begin{footnotesize}
\renewcommand{\arraystretch}{1.65}
\setlength{\arraycolsep}{1pt}
\begin{bmatrix}
\partial_t &  & & & &  -{\rm d}_{\nabla}^s& & \\ 
&{\star_s{\partial_t\star_s}}& & &{\rm d}_{\nabla}^s &  & & \\ 
& &\partial_t & &  & {\star_s{{\rm d}_\nabla^s\star_s}}& &-{\rm d}_{\nabla}^s     \\ 
&  &  &{\star_s{\partial_t\star_s}}&  {\star_s{{\rm d}_\nabla^s\star_s}} & & {\rm d}_{\nabla}^s  & \\ 
&  {\star_s{{\rm d}_\nabla^s\star_s}} & & -{\rm d}_{\nabla}^s & \partial_t& & & \\ 
{\star_s{{\rm d}_\nabla^s\star_s}} & & {\rm d}_{\nabla}^s & & &-{\star_s{\partial_t\star_s}}& & \\ 
& &  &{\star_s{{\rm d}_\nabla^s\star_s}}& & & \partial_t &\\ 
& & {\star_s{{\rm d}_\nabla^s\star_s}}& & & & & -{\star_s{\partial_t\star_s}}
\end{bmatrix}
\begin{bmatrix}
      f^{3}_s\\ 
      F^{3}_s\\ 
      f^{1}_s\\ 
      F^{1}_s\\ 
      f^{2}_s\\ 
      F^{2}_s\\ 
      f^0\\ 
      F^0
\end{bmatrix}
\, = \, 
\begin{bmatrix}
    G^{3}_s\\ 
    g^{3}_s \\ 
    G^{1}_s \\ 
    g^{1}_s \\ 
    G^{2}_s \\ 
    g^{2}_s \\ 
    G^0 \\ 
    g^0
\end{bmatrix}
\end{footnotesize}
\end{displaymath}
\end{corollary}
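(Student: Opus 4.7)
The plan is to translate the eight spacetime unknowns of Theorem~\ref{thr:Conservation law in space and time} into sixteen spatial unknowns by exploiting the bijection between time-like spacetime $p$-forms and spatial $(p-1)$-forms. Concretely, every $f^p_t$ can be written uniquely as ${\rm d}t\wedge F^{p-1}_s$ for a spatial $(p-1)$-form $F^{p-1}_s$ on $\Omega_s$, and similarly $g^p_t={\rm d}t\wedge G^{p-1}_s$. Under this identification the column vector of the theorem becomes $(f^3_s,F^3_s,f^1_s,F^1_s,f^2_s,F^2_s,f^0,F^0)^T$ (after suitable reordering) and the right-hand side becomes the source vector of the corollary.

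Next I would rewrite each of the four differential operators in the matrix in terms of the purely spatial operators $\partial_t$, ${\rm d}^s_\nabla$, $\star_s$. Four local identities do all the work: (i) ${\rm d}t\wedge\partial_t$ applied to a spatial form $f^p_s$ yields ${\rm d}t\wedge(\partial_t f^p_s)$, which under the identification \emph{is} the spatial form $\partial_t f^p_s$; (ii) ${\rm d}^s_\nabla$ applied to ${\rm d}t\wedge F^{p}_s$ equals ${\rm d}t\wedge{\rm d}^s_\nabla F^{p}_s$, so it becomes ${\rm d}^s_\nabla F^{p}_s$ in the spatial picture; (iii) from the signature $(-,+,+,+)$ one gets relations of the shape $\star f^p_s=\pm\,{\rm d}t\wedge\star_s f^p_s$ and $\star({\rm d}t\wedge F^p_s)=\pm\star_s F^p_s$, from which $\star({\rm d}t\wedge\partial_t)\star$ acting on a time-like entry reduces to $\pm\star_s\partial_t\star_s$ acting on the corresponding $F^p_s$; and similarly (iv) $\star{\rm d}^s_\nabla\star$ becomes $\pm\star_s{\rm d}^s_\nabla\star_s$. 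Substituting these four rewrites into every nonzero entry of the matrix of the theorem, and then permuting rows and columns so that the unknowns appear in the order $(f^3_s,F^3_s,f^1_s,F^1_s,f^2_s,F^2_s,f^0,F^0)$, produces the matrix of the corollary.

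The main obstacle, and the only nontrivial content, is the sign bookkeeping. Each of the eight rows inherits signs from two independent sources: the signs already displayed in the theorem, and the degree-dependent signs carried by the identities $\star\leftrightarrow\star_s$ and $\star^2=(-1)^{p(n-p)+1}$ in Minkowski signature. Because some rows of the theorem involve $-\star({\rm d}t\wedge\partial_t)\star$ with $p$ time-like forms of different degree, the mismatches do not cancel uniformly, and one must check each of the eight rows separately to confirm that the signs displayed in the corollary are correct. Two degenerate simplifications make the counting consistent with the format of the corollary: the top-degree relation ${\rm d}^s_\nabla f^3_s\equiv 0$ on the $3$-dimensional $\Omega_s$, and the corresponding $\star_s{\rm d}^s_\nabla\star_s F^0\equiv 0$ which comes from ${\rm d}^s_\nabla$ of a spatial $3$-form. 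Once every row checks out, the identity of the two systems is immediate.
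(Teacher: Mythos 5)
Your plan is the same as the paper's: write each time-like component as $f^p_t={\rm d}t\wedge F^{p-1}_s$, re-express the four spacetime operators through $\partial_t$, ${\rm d}^s_\nabla$ and $\star_s$, substitute into the matrix of theorem~\ref{thr:Conservation law in space and time}, and strip the common ${\rm d}t\wedge$ factor from the rows that remain time-like. The one concrete error is your identity (ii): by the Leibniz rule ${\rm d}^s_\nabla({\rm d}t\wedge F^{p}_s)=({\rm d}^s_\nabla{\rm d}t)\wedge F^p_s-{\rm d}t\wedge{\rm d}^s_\nabla F^{p}_s=-\,{\rm d}t\wedge{\rm d}^s_\nabla F^{p}_s$, not $+\,{\rm d}t\wedge{\rm d}^s_\nabla F^{p}_s$. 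This is not a deferrable bookkeeping detail: that minus sign is precisely the origin of the $-{\rm d}^s_\nabla$ entries in rows 1, 3 and 5 of the corollary's matrix, so with (ii) as you state it the row-by-row check you propose would fail to reproduce the displayed signs. With (ii) corrected, your identities agree with the five substitution formulas the paper tabulates (${\rm d}^s_\nabla f^p_t=-{\rm d}t\wedge{\rm d}^s_\nabla F^{p-1}_s$, $\star{\rm d}^s_\nabla\star f^p_s=(-1)^p\star_s{\rm d}^s_\nabla\star_s f^p_s$, etc.) and the rest of the argument goes through as you describe; note also that the two degenerate identities you cite are already consumed in the proof of the theorem rather than needed again here.
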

\begin{proof}
Let $p$-form $f^p$ be decomposed into
\begin{displaymath}
f^p = f^p_t + f^p_s,
\end{displaymath}
as in the proof of theorem \ref{thr:Conservation law in space and time}.  

The exterior derivative satifies
\begin{displaymath}
{\rm d}(f^p\wedge g^r) = {\rm d}f^p\wedge g^r + (-1)^p f^p\wedge  {\rm d}g^r.
\end{displaymath}
Hence, the space-like exterior derivative of ${\rm d}^s_\nabla f^p$ can be given by
\begin{displaymath}
 {\rm d}^s_\nabla f^p = {\rm d}^s_\nabla f^p_t + {\rm d}^s_\nabla f^p_s = {\rm d}^s_\nabla({\rm d}t\wedge F^{p-1}_s) + {\rm d}^s_\nabla f^p_s = -{\rm d}t\wedge{\rm d}^s_\nabla F^{p-1}_s +  {\rm d}^s_\nabla f^p_s,
\end{displaymath}
where $F^{p-1}_s$ is a space-like $(p-1)$-form.

The $\star$ operator in $\Omega$ can be given in terms of $\star_s$ as follows
\begin{displaymath}
\star f^p = \star(f^p_t + f^p_s) = \star({\rm d}t\wedge F^{p-1}_s + f^p_s) = -{\star_s F^{p-1}_s} + (-1)^p{\rm d}t\wedge {\star_s f^p_s}\, .
\end{displaymath}
Consequently, we also have
\begin{displaymath}
{\rm d}^s_\nabla{\star f^p} = -{\rm d}^s_\nabla{\star_s F^{p-1}_s} + (-1)^{p+1}{\rm d}t\wedge {\rm d}^s_\nabla{\star_s f^p_s}\, ,
\end{displaymath}
and recursively
\begin{displaymath}
{\star{\rm d}^s_\nabla{\star f^p}} = (-1)^p{\rm d}t\wedge{\star_s{\rm d}^s_\nabla{\star_s F^{p-1}_s}} + (-1)^p{\star_s{\rm d}^s_\nabla{\star_s f^p_s}}\, .
\end{displaymath}

The time derivatives satifies
 \begin{displaymath}
{\rm d}t\wedge\partial_t f^p = {\rm d}t\wedge\partial_t ({\rm d}t\wedge F^{p-1} + f^p_s) = {\rm d}t\wedge\partial_t f^p_s,
\end{displaymath}
and
\begin{displaymath}
{\star{({\rm d}t\wedge{\partial_t})}\star} f^p = -{\star{({\rm d}t\wedge{\partial_t})}{\star_s F^{p-1}_s}} = {\star_s\partial_t{\star_s F^{p-1}_s}}.
\end{displaymath}

Summing up, we have the following
\begin{alignat*}{1}
{\rm d}^s_\nabla f^p_t & = -{\rm d}t\wedge{\rm d}^s_\nabla F^{p-1}_s, \\
{\star{\rm d}^s_\nabla{\star f^p_s}} &=(-1)^p{\star_s{\rm d}^s_\nabla{\star_s f^p_s}}, \\
{\star{\rm d}^s_\nabla{\star f^p_t}} & = (-1)^p{\rm d}t\wedge{\star_s{\rm d}^s_\nabla{\star_s F^{p-1}_s}}, \\
{\rm d}t\wedge\partial_t f^p &= {\rm d}t\wedge\partial_t f^p_s,\\
{\star{({\rm d}t\wedge{\partial_t})}\star} f^p_t & = {\star_s\partial_t{\star_s F^{p-1}_s}}.
\end{alignat*}
Substituting this back to theorem \ref{thr:Conservation law in space and time} yields
\begin{displaymath}
\begin{footnotesize}
\renewcommand{\arraystretch}{1.85}
\setlength{\arraycolsep}{0.8pt}
\begin{bmatrix}
{{\rm d}t}{\wedge\partial_t} &  & & & & -{\rm d}t{\wedge{\rm d}_{\nabla}^s}& & \\ 
&{\star_s{\partial_t\star_s}}& & &{\rm d}_{\nabla}^s &  & & \\ 
& &{{\rm d}t}{\wedge\partial_t}& &  & {\rm d}t{\wedge{\star_s{{\rm d}_\nabla^s\star_s}}}& &-{\rm d}t{\wedge{\rm d}_{\nabla}^s}  \\ 
&  &  &{\star_s{\partial_t\star_s}}&  {\star_s{{\rm d}_\nabla^s\star_s}} & & {\rm d}_{\nabla}^s  & \\ 
&{\rm d}t{\wedge{\star_s{{\rm d}_\nabla^s\star_s}}} & & -{\rm d}t{\wedge{\rm d}_{\nabla}^s} & {{\rm d}t}{\wedge\partial_t}& & & \\ 
{\star_s{{\rm d}_\nabla^s\star_s}} & & {\rm d}_{\nabla}^s & & &-{\star_s{\partial_t\star_s}}& & \\ 
& &  &{{\rm d}t}{\wedge{\star_s{{\rm d}_\nabla^s\star_s}}}& & & {{\rm d}t}{\wedge\partial_t}&\\ 
& & {\star_s{{\rm d}_\nabla^s\star_s}}& & & & & -{\star_s{\partial_t\star_s}}
\end{bmatrix}
\begin{bmatrix}
      f^{3}_s\\ 
      F^{3}_s\\ 
      f^{1}_s\\ 
      F^{1}_s\\ 
      f^{2}_s\\ 
      F^{2}_s\\ 
      f^0\\ 
      F^0
\end{bmatrix}
\, = \, 
\begin{bmatrix}
   {\rm d}t{\wedge G^{3}_s}\\ 
    g^{3}_s \\ 
    {\rm d}t{\wedge G^{1}_s}\\ 
    g^{1}_s \\ 
    {\rm d}t{\wedge G^{2}_s}\\ 
    g^{2}_s  \\ 
    {\rm d}t{\wedge G^0} \\ 
    g^0
\end{bmatrix}
\end{footnotesize}
\end{displaymath}
All the entries of the odd rows have a wedge product with ${{\rm d}t}$ from the left, and the proof follows by taking this wedge product as a common factor.
\end{proof}

\begin{remark}
\label{rmk:metric proxy}
Let $\Omega_s$ be a 3-dimensional Euclidean space. In terms of classical vector analysis the general conservation law of corollary \ref{crl:Conservation law in space and time II} corresponds with
\begin{displaymath}
\begin{footnotesize}
\renewcommand{\arraystretch}{1.75}
\begin{bmatrix}
{\partial_t}&                   &              &                  &                 &  -{\rm div}  &                &                   \\ 
               & {\partial_t}\alpha_\omega&               &                  &  {\rm div} &                  &                 &                   \\ 
               &                  &{\partial_t}&                  &                 &  {\rm curl}\,\alpha_v &                & -{\rm grad} \\ 
               &                  &               &  {\partial_t}\alpha_s & {\rm curl}\,\alpha_u  &                  & {\rm grad}&                \\ 
               & {\rm  grad}\,\alpha_\omega&                & -{\rm curl}  & {\partial_t} &                  &              &                    \\ 
{\rm  grad}\,\alpha_\phi&              &  {\rm curl} &                 &                 &{-\partial_t}\,\alpha_v&                 &                   \\ 
               &                 &                  & {\rm div}\,\alpha_s  &                &                  &{\partial_t} &                    \\ 
               &                 &  {\rm div}\,\alpha_r  &                 &                &                  &                 & {-\partial_t}\alpha_\psi 
\end{bmatrix}
\begin{bmatrix}
      \phi \\ 
      \omega \\ 
      \overline{r} \\ 
      \overline{s} \\ 
      \overline{u} \\ 
      \overline{v} \\ 
      \varphi \\ 
      \psi
\end{bmatrix}
\, = \,
\begin{bmatrix}
    \theta \\ 
    \lambda \\ 
    \overline{a} \\ 
    \overline{c} \\ 
    \overline{l} \\ 
    \overline{m}  \\ 
    \kappa \\ 
    \beta
\end{bmatrix}\, ,
\end{footnotesize}
\end{displaymath}
where $\alpha_\phi$, $\alpha_\phi$, $\dots$, $\alpha_\psi$ are parameters that follow from the possible change of type when working
out the (1-) vectors corresponding with ${\star_s f^p_s}$ and ${\star_s F^p_s}$ in Euclidean space. Functions $\beta$, $\kappa$, $\lambda$, $\theta$, $\varphi$, $\psi$, $\phi$, and $\omega$ are scalar fields, and $\overline{a}$, $\overline{c}$, $\overline{l}$, $\overline{m}$, $\overline{r}$, $\overline{s}$, $\overline{u}$, and $\overline{v}$ are vector fields. (In case of $E$-valued forms, the corresponding entries of the column vectors $[\phi, \omega, \dots , \psi]^T$ and $[\theta,\lambda, \dots , \beta]^T$ should be considered as vector-valued.)
\end{remark}
\begin{proof}
In an $n$-dimensional Euclidean space  any $p$-form $f^p$ can be identified with a $p$-vector $f_p$; For all $p$-vectors $u_p$
\begin{displaymath}
(f_p, u_p) = f^p(u_p)
\end{displaymath}
should hold. Now, in particular, in dimension three operators ${\rm grad}$, ${\rm curl}$, and ${\rm div}$ are defined such that for all
$u_1$, $u_2$, and $u_3$
\begin{alignat*}{2}
({\rm grad} f_0, u_1) &= {\rm d}f^0(u_1),\\
({\rm curl}f_1, {\star_s u_2})   &= {\rm d}f^1(u_2),\\
({\rm div}({\star_s f_2}), {\star_s u_3})   &= {\rm d}f^2(u_3).
\end{alignat*}
hold, and the proof follows from this identification.
\end{proof}

\section{Physics field models as instances of the conservation law}
\label{sec:Physics models}

A large class of particular field models from physics can now be given as instances of the conservation law given in corollary \ref{crl:Conservation law in space and time II}, as demonstrated in the next subsections. 

\subsection{Electromagnetism}

We start with electromagnetism. In $\Omega=\Omega_t\times\Omega_s$, the Faraday field becomes $b + e\wedge {\rm d}t$, where $b$ represents magnetic flux density and $e$ represents the electric field strength. Both $b$ and $e \wedge {\rm d}t$ are $2$-forms. The source term of electromagnetism is $\star(-j\wedge {\rm d}t + q) $, where $2$-form $j$ is current density and $3$-form $q$ is charge density. 

To substitute $F=f^2_s+f^2_t=b+ e \wedge {\rm d}t= b + {\rm d}t\wedge(-e)$ and $G=g^1_s+g^1_t=\star(j\wedge {\rm d}t + q)= {\star_s j} -{\rm d}t\wedge{\star_s q}$ to corollary \ref{crl:Conservation law in space and time II}, we set $F^1_s = -e$, $f^2_s=b$, $g^1_s={\star_s j}$, and $G^0={-{\star_s q}}$, and then, the general conservation law yields
\begin{alignat*}{3}
{\rm d}^s\, b\, & =\, 0\,, & \text{2. row} &, \\
{\rm d}^s e \, +\, \partial_t b\, & =\, 0, & \text{5. row} &,\\
-{\star_s{\partial_t \star_s}}\, e\, + \, \star_s {{\rm d}^s \star_s}\, b\, & =\, {{\star_s j}}, & \text{4. row} &,\\
-{\star_s {{\rm d}^s \star_s}}\, e\, & =\, {-{\star_s q}}, & \quad \text{7. row}& .
\end{alignat*}
This is an equivalent expression for Maxwell's equations
\begin{alignat*}{3}
{\rm d}^s\, b\, &=\, 0  & {\rm d}^s e \, +\, \partial_t b \, &= \, 0, \\
 {\rm d}^s h \, &= \, j + \partial_t d \quad\quad\quad\quad & {\rm d}^s d \, &=\,  q,
\end{alignat*}
together with the constitutive laws $h=\star_\nu b$ and $d=\star_\epsilon e$, where $\star_\nu = \nu\star_s$ and $\star_\epsilon=\epsilon\star_s$. (The material parameters can also be embedded into the Hodge operator, see \cite{Bossavit2001}.)

To give the same in terms of classical vector analysis, one first selects $\overline{u}=b$, $\overline{s}=-e$, $\overline{l}=j$,$\kappa=-q$, $\alpha_s=\epsilon$,  and $\alpha_u = \nu$. Then, the general conservation law in remark \ref{rmk:metric proxy} yields
\begin{alignat*}{3}
{\rm div}\, b\, & =\, 0\,, & \text{2. row} &, \\
{\rm curl}\, e \, +\, \partial_t b & =\, 0, & \text{5. row} &,\\
-{\partial_t}\,\epsilon e \, + \, {\rm curl}\,\nu b\, & =\,  j, & \text{4. row} &,\\
-{\rm div}\, \epsilon e \, & =\, -q & \quad \text{7. row}& .
\end{alignat*}

\subsection{Schr\"odinger equation}

The non-relativistic Schr\"odinger equation can also be derived from the general conservation law as a coupled pair of differential equations. The components of the wave function by  are denoted by $\varphi_R$ and $\varphi_I$. In addition, we introduce a pair $\{q_R, q_I\}$ of auxiliary variables. Then we write 
\begin{alignat*}{3}
F^0 & =  \hslash\,\varphi_R,\quad\quad\quad &f^3_s & = \hslash\,\varphi_I,\\
f^1_s & = \frac{\hslash}{2m} q_R, &F^2_s & = \frac{\hslash}{2m} q_I, \\
G^1_s & = q_R,  &g^2_s & = q_I, \\
g^0 &= -V\varphi_r, &G^3_s & = -V\varphi_I,
\end{alignat*}
and
\begin{displaymath}
-{\star_s\varphi_R}\, = \, \varphi_I. 
\end{displaymath}
Be aware, although the subscripts $R$ and $I$ correspond with what are conventionally expressed as the real and imaginary components, respectively, here, they are here just labels.

Substituting the $R$-labeled terms $F^0$, $f^1_s$, $G^1_s$, and $g^0$ back into corollary \ref{crl:Conservation law in space and time II} results in
\begin{alignat*}{3}
\frac{\hslash}{2m} \partial_t q_R -\hslash\, {\rm d}^s\varphi_R\,  & =\,  q_R\,, \quad &&\text{3. row}, \\
\frac{\hslash}{2m}\, {\rm d}^s q_R \, & =\, 0, &&\text{6. row},\\
\frac{\hslash}{2m}\,{{\star_s{\rm d}^s}\star_s}q_r - \hslash{{\star_s{\partial}_t}\star_s}\varphi_r & =\, -V\varphi_R, \quad &&\text{8. row}.
\end{alignat*}
In the non-relativistic case,  $\partial_t q_R$ is neglected by a modelling decision \cite{Kravchenko_2005} implying the third row can be written as $q_R=-\hslash\, {\rm d}^s\varphi_R$. When this is substituted into the equation from the sixth row, that becomes redundant since ${\rm d}^s{\rm d}^s\equiv 0$. What remains is the eight row, and by substituting $q_R=-\hslash\,{\rm d}^s\varphi_R$ into it results in
\begin{displaymath}
-\hslash{{\star_s{\partial}_t}\star_s}\varphi_R - \frac{\hslash^2}{2m} {{\star_s{\rm d}^s}\star_s}{\rm d}^s\varphi_R \,  =\,  -V\varphi_R\,. 
\end{displaymath}
Now, as we have ${\star_s\varphi_R} = -\varphi_I$, the equation arising from the eight row is equivalent to 
 \begin{displaymath}
\hslash\,{{\star_s{\partial}_t}\varphi_I} - \frac{\hslash^2}{2m} {{{\rm d}^s\star_s}{\rm d}^s\star_s}\varphi_R \,  =\,  -V\varphi_R\,. 
\end{displaymath}
Notice, ${{\star_s{\rm d}^s}\star_s}{\rm d}^s$ is the Laplace operator \cite{Grabowska_2008}.

Symmetrically, substituting the $I$-labeled terms $f^3_s$, $F^2_s$, $g^2_s$, and $G^3_s$ back into corollary \ref{crl:Conservation law in space and time II} yields
\begin{alignat*}{3}
\hslash\,{{\star_s{\rm d}^s}\star_s}\varphi_I - \frac{\hslash}{2m}\,{{\star_s{\partial}_t}\star_s}q_I & =\, q_I,\quad &&\text{6. row}, \\
\frac{\hslash}{2m}\,{{\star_s{\rm d}^s}\star_s} q_I \, & =\, 0, &&\text{3. row},\\
\hslash\, \partial_t\varphi_I\ - \frac{\hslash}{2m} {\rm d}^s q_I \,  & =\,  -V\varphi_I,  \quad &&\text{1. row}.
\end{alignat*}

Since $\partial_t q_R$ is neglected, so is also $-\partial_t q_I=\star_s\partial_t \varphi_R$, and this implies the equation of the third row of the conservation law is a tautology. Then, we have $q_I= \hslash\,{{\star_s{\rm d}^s}\star_s}\varphi_I$, and when this and $-{\star_s\varphi_R}= \varphi_I$ is substituted to the equation of the first row
 one gets
 \begin{displaymath}
-\hslash\,{{\star_s{\partial}_t}\varphi_R} - \frac{\hslash^2}{2m}{{\star_s{\rm d}^s}\star_s}{\rm d}^s\varphi_I \,  =\,  -V\varphi_I\,. 
\end{displaymath}
The Schrödinger equation is now the following pair of equations
\begin{alignat*}{1}
\hslash\,{{\star_s{\partial}_t}\varphi_R} + \frac{\hslash^2}{2m}{{\star_s{\rm d}^s}\star_s}{\rm d}^s\varphi_I \, &=\,  V\varphi_I\, ,\\
\hslash\,{{\star_s{\partial}_t}\varphi_I} - \frac{\hslash^2}{2m} {{{\rm d}^s\star_s}{\rm d}^s\star_s}\varphi_R \,  &=\,  -V\varphi_R\, .
\end{alignat*}
This yields the same solutions as the textbook expression of the Schrödinger equation
\begin{equation}
\hslash\,\partial_t \varphi - {\rm i}\frac{\hslash^2}{2m}\,{\rm div}\,{\rm grad}\, \varphi = -{\rm i}V\varphi,
\end{equation}
where $\varphi$ is complex valued scalar function.

The Dirac equation and the Gross-Pitaevskii equation\cite{Gross1961} \cite{Pitaevskii1961} \cite{Seiringer2002} are concretised in a similar fashion\cite{Rabina2018}. The Klein-Gordon equation is akin to the Schr\"odinger equation but second order in time.

\subsection{Elasticity}

To demonstrate deriving the basic equations of small-strain elasticity \cite{Abraham-Marsden} \cite{Segev-Rodnay} \cite{Kanso-et-al.-2007} \cite{Arnold2007} \cite{Yavari2008} \cite{Yamaoka_2010}, \cite{Stefanov1996} from the general conservation law we denote velocity by $u=\partial_t\nu$, where displacement $\nu$ is a vector-valued $0$-form. In other words, $\nu$ is locally a vector of displacements in as many directions as the dimension of $\Omega_s$. The displacement is a diffeomorphism of the material particles (or material points) from a reference configuration to a deformed body \cite{Kanso-et-al.-2007}. Especially, this map exists without the metric structure.

When expressed without metric, stress $\sigma$ is a covector-valued $2$-form. Informally, as explained in \cite[chp 38]{Feynman}, stress boils down to the idea of  "force per unit area". Stress maps $2$-vectors (or ordered pairs of ordinary $1$-vectors) representing "virtual oriented areas" to a co-vector that corresponds with virtual work. The virtual work is the metric-independent counterpart to the force \cite{Bossavit-Japan2} interpreted as the "work per length". (This interpretation requires the metric structure.) 

Following similar kind of of reasoning, linearised strain $\varepsilon$ becomes a vector-valued $1$-form. As explained in \cite[chp 38]{Feynman}, an informal interpretation of strain boils down to the idea of  "stretch per unit length". Consequently, in a more general sense, the idea of the metric-independent linearised $\varepsilon$ is to map ordinary $1$-vectors to displacement $\nu$. The source term is the body force $f_V$, which is a vector valued $3$-form.

The stress-strain relation is established with the Hodge operator as a map from vector-valued $1$-forms to covector-valued 2-forms \cite{Yavari2008} \cite{Kovanen2016}. Informally, this is the map between the "stretches per unit length" and the "forces per unit areas". Here, we denote such a Hodge operator by  $\star_s^C$, where the superscript  $C$ is employed to denote all the parameters of the stress-strain relation are incorporated into the Hodge operator. 

Notice, that strain is of the form $\varepsilon=s\otimes f$, and in dimension $n=3$ both $f$ and $s$ have three components. Consequently, strain $\varepsilon$ involves $3\times 3=9$ elements. Stress $\sigma$ has similarly nine elements, and hence, the $\star_s^C$ operator contains $9\times9=81$ elements (which can be further reduced by symmetry considerations). Notice also, the product $\varepsilon\wedge{\star_s^C}\varepsilon = \varepsilon\wedge\sigma$ yields an ${\rm End}(E)$-valued form whose degree is $n$. When the trace of the product is integrated over space, one gets the strain energy \cite{Arnold2007}
\begin{displaymath}
\frac{1}{2}\int\limits_{\Omega_s} {\rm tr}(\varepsilon\wedge{\star_s^C}\varepsilon) = \frac{1}{2}\int\limits_{\Omega_s} {\rm tr}(\varepsilon\wedge\sigma).
\end{displaymath}

Now, for the conservation law we set $F^0=u$, $f^1_s = \varepsilon$, $g^0=-\star_s f_v$ and denote mass density by $\rho$. When these are substituted back to corollary \ref{crl:Conservation law in space and time II} and the constitutive relations are taken into account one gets
\begin{alignat*}{3}
\partial_t\varepsilon \, -\, {\rm d}_\nabla^s u \, & = \, 0\quad &&\text{3rd row}\, , \\
{\rm d}_\nabla^s\varepsilon \,& = \, 0\quad &&\text{6th row}\, , \\
{\star_s{{\rm d}_\nabla^s \star_s^C}}\varepsilon \, -\, {\star_s{\partial_t\star_s^\rho}} u\, & = \, -{\star_s f_v}\quad &&\text{8th row}\, .
\end{alignat*}
Since we have $u=\partial_t\nu$, the equation from the third row is equivalent to $\varepsilon={{\rm d}_\nabla^s} \nu$. Thereafter the second
equation following from the sixth row becomes a tautology. The last equation is equivalent to ${\rm d}_\nabla^s\sigma - {\star_s^\rho \partial_t u} =-f_v$.

Summing up, the conservation law for small-strain elasticity can be written as
\begin{alignat*}{3}
-\partial_t {\varepsilon} \, +\, {{\rm d}_\nabla^s} u \, & =\, 0\,,\quad& \sigma \, &=\, {\star_s^C \varepsilon} \, ,\\ 
{\star_s^\rho \partial_t u} \, -\, {\rm d}_\nabla^s \sigma \, & =\,  f_v\, , \quad &u\, &=\, \partial_t\nu\, ,
\end{alignat*}

The textbook counterpart of the conservation law is obtained from remark \ref{rmk:metric proxy} by choosing $\psi=\overline{u}$, $\overline{r} = \overline{\varepsilon}$, $\beta=-\overline{f}_v$, $\alpha_r=C$, and $\alpha_\psi=\rho$. (The symbols with an overline indicate vector valued functions.) As a result one gets
\begin{alignat*}{3}
-\partial_t \overline{\varepsilon} \, +\, {\rm grad}\,\overline{u} \, & =\, 0\,,\quad& \overline{\sigma} \, &=\, C\overline{\varepsilon} \, ,\\ 
\rho \partial_t  \overline{u} \, -\, {\rm div}\,\overline{\sigma} \, & =\,  \overline{f}_v\, , \quad &\overline{u}\, &=\, \partial_t\overline{\nu}\, .
\end{alignat*}

\subsection{Yang-Mills equation}

To demonstrate that the general conservation law need not to be restricted only to ordinary and $E$-valued forms, our last example is the Yang-Mills equation \cite{Yang-Mills1954} \cite{Yang2014} \cite{Tao2004}. It calls for ${\rm End}(E)$-valued forms. As explained by Baez and Muniain \cite{Baez-Muniain}, the Yang-Mills equation is structurally an immediate extension of Maxwell's equations. Following their work, the counterparts to the electric and magnetic fields are the ${\rm End}(E)$-valued 1-form $e$ and the ${\rm End}(E)$-valued 2-form  $b$, respectively. The source term is $j-\rho\,{\rm d}t$. (In this case the decomposition of the exterior covariant derivative into time and space-like parts is ${\rm d}_\nabla = {\rm d}t\wedge\nabla_t + {\rm d}^s_\nabla$, see \cite[p. 262]{Baez-Muniain}). The Yang-Mills equation can then be concretised in the same manner as the above Maxwell's equations from the general conservation law. This results in 
\begin{alignat*}{1}
{\rm d}_\nabla^s\, b\, &=\, 0\,, \\
{\rm d}_\nabla^s e \, +\, \nabla_t b\, &=\, 0,\\
-{\star_s{{\nabla_t}}\star_s}\, e\, + \, \star_s {{\rm d}_\nabla^s \star_s}\, b\, &=\, \star_s j,\\
 \star_s {{\rm d}^s_\nabla\star_s}\, e\, &=\, \star_s\rho \, .
\end{alignat*}
For further details, see \cite{Baez-Muniain}.

\section{Approximations in finite dimensional spaces}
\label{sec:Approximations}

The three building boxes of the general conservation law are a pair of field $\{F,F^*\}$, a pair of differential  equations ${\rm d}F=0$, ${\rm d}F^\ast = G$, and the constitutive relation $F^\ast = \star F$ (where ${\rm d}$ is now short for  the covariant exterior derivative and the exterior derivative). The main principle behind the commonly employed numerical techniques is to maintain two of the equations exact in finite dimensional spaces and to approximate the remaining third one. 

For example, the finite element method \cite{Ciarlet} satifies one of the differential equations and the constitutive relation exactly in finite dimensional spaces while the remaining differential equation is approximated in the "weak sense" employing variational techniques that follow from the action principle. 

In Yee-like schemes \cite{Yee1966} \cite{Bossavit-Kettunen1999} --known also as the finite difference method or generalized finite differences \cite{BossavitPIER2001}, discrete exterior calculus \cite{Hirani2003}, finite integration technique \cite{Weiland1984}, etc.-- the strategy is to fulfil the pair of differential equations exactly in finite dimensional spaces, while the constitutive relation is approximated only on a finite subset of points of the manifold. The "generalized view" of finite differences \cite{Yee1966} \cite{Bossavit-Kettunen1999} \cite{BossavitPIER2001} \cite{Hirani2003} emphasizes the importance of recognizing the differential equations call only for a differentiable structure. For, this then implies the metric structure is only needed to express the constitutive relations. This is to say, the approximation of the Hodge operator in finite dimensional spaces, that is, "the discrete Hodge" becomes of especial interest \cite{Tarhasaari-Kettunen-Bossavit1999}. (Notice, however, the "discrete Hodge" is not a Hodge operator. The term "discrete Hodge" is just a compound word.) Examples of applying Yee-like schemes to the general conservation law can be found in \cite{RabinaESAIM2018}.

\section{Conclusions}
\label{sec:conclusions}

In this paper we have introduced a general field and its conservation.  The general field is a forma sum of "all" differential forms that may 
appear in field theories, and the conservation law yields solutions whose small changes do not change an action to the first order.  This expresses the possibilities regarding differentiating general fields. The approach is built such that that particular differential equations involved in physics field theories become instances of the general conservation law. This has several advantages in developing software systems to solve for physics boundary value problems. A single system can be implemented to cover a large class of problems without restricting the set of eligible problems to a fixed list given a priori. Furthermore, the general conservation law can be employed to test the correctness of specific models by checking whether they can be instantiated from the general model.

\section*{References}
\bibliographystyle{amsplain}
\bibliography{references}

\providecommand{\bysame}{\leavevmode\hbox to3em{\hrulefill}\thinspace}
\providecommand{\MR}{\relax\ifhmode\unskip\space\fi MR }
% \MRhref is called by the amsart/book/proc definition of \MR.
\providecommand{\MRhref}[2]{%
  \href{http://www.ams.org/mathscinet-getitem?mr=#1}{#2}
}
\providecommand{\href}[2]{#2}
\begin{thebibliography}{10}

\bibitem{Abraham-Marsden}
R.~Abraham and J.E. Marsden, \emph{Foundations of mechanics}, 2 ed.,
  Addison-Wesley, 1987.

\bibitem{Arnold2007}
D.~N. Arnold, R.~S. Falk, and R.~Winther, \emph{Mixed finite element methods
  for linear elasticity with weakly imposed symmetry}, Math. Comp. \textbf{76}
  (2007), no.~260, 1699--1723.

\bibitem{Baez-Muniain}
J.~Baez and J.P. Muniain, \emph{Gauge fields, knots and gravity}, Series on
  Knots and Everything, vol.~4, World Scientific, 1994.

\bibitem{Bleecker}
D.~Bleecker, \emph{Gauge theories and variational principles}, Addison-Wesley,
  1981.

\bibitem{Bossavit-Japan2}
A.~Bossavit, \emph{On the geometry of electromagnetism, (2): {G}eometrical
  objects}, Journal of Japan Society of Applied Electromagnetism and Mechanics
  \textbf{6} (1998), 114--123.

\bibitem{BossavitPIER2001}
\bysame, \emph{'{G}eneralized finite differences' in computational
  electromagnetics}, Progress in Electromagnetics Research, PIER (F.L.
  Teixeira, ed.), vol.~32, EMW, Cambridge, Ma, 2001, pp.~45--64.

\bibitem{Bossavit2001}
\bysame, \emph{On the notion of anisotropy of constitutive laws: Some
  implications of the '{H}odge implies metric' result}, Compel \textbf{20}
  (2001), no.~1, 233--239.

\bibitem{Bossavit-Kettunen1999}
A.~Bossavit and L.~Kettunen, \emph{Yee-like schemes on a tetrahedral mesh, with
  diagonal lumping}, International Journal of Numerical Modelling: Electronic
  Networks, Devices and Fields \textbf{12} (1999), 129--142.

\bibitem{Ciarlet}
P.~G. Ciarlet, \emph{The finite element method for elliptic problems},
  North-Holland Publishing Company, 1978.

\bibitem{Crampin-Pirani}
M.~Crampin and F.A.E. Pirani, \emph{Applicable differential geometry},
  Cambridge Univ. Press, 1986.

\bibitem{Feynman}
R.~P. Feynman, R.~B. Leighton, and M.~L. Sands, \emph{The {F}eynman lectures on
  physics}, vol.~2, Reading, Mass: Addison-Wesley Pub. Co., USA, 1963.

\bibitem{Flanders}
H.~Flanders, \emph{Differential forms with application to the physical
  sciences}, Dover, 1989.

\bibitem{Frankel}
T.~Frankel, \emph{The geometry of physics, an introduction}, 3 ed., Cambridge
  Univ. Press, Cambridge, USA, 2012.

\bibitem{Frolicher-Nijenhuis}
A.~Fr{\"{o}}licher and A.~Nijenhuis, \emph{Theory of vector-valued differential
  forms: {P}art i. {D}erivations in the graded ring of differential forms},
  Indagationes Mathematicae (Proceedings) \textbf{59} (1956), 338--350.

\bibitem{Grabowska_2008}
K.~Grabowska, J.~Grabowski, and P.~Urbanski, \emph{The {S}chr{\"{o}}dinger
  operator as a generalized {L}aplacian}, Journal of Physics A: Mathematical
  and General \textbf{41} (2008), no.~14, 145204.

\bibitem{Gross1961}
E.~P. Gross, \emph{Structure of a quantized vortex in boson systems}, Nuovo
  Cimento \textbf{20} (1961), 454--477.

\bibitem{Hirani2003}
A.~N. Hirani, \emph{Discrete exterior calculus}, Ph.D. thesis, Caltech,
  Pasadena, California, 5 2003.

\bibitem{Hodge1941}
W.~V.~D. Hodge, \emph{The theory and applications of harmonic integrals},
  Cambridge Univ. Press, Cambridge, USA, 1941.

\bibitem{Kanso-et-al.-2007}
E.~Kanso, M.~Arroyo, Y.~Tong, A.~Yavari, J.~E. Marsden, and M.~Desbrun,
  \emph{On the geometric character of stress in continuum mechanics}, Z. Angew.
  Math. Phys. \textbf{58} (2007), 1--14.

\bibitem{Kovanen2016}
T.~Kovanen, T.~Tarhasaari, and L~Kettunen, \emph{Formulation of small-strain
  magneto-elastic problems}, https://arxiv.org/abs/1602.04966.

\bibitem{Kravchenko_2005}
V.~V. Kravchenko, \emph{On the reduction of the multidimensional stationary
  {S}chr{\"{o}}dinger equation to a first-order equation and its relation to
  the pseudoanalytic function theory}, Journal of Physics A: Mathematical and
  General \textbf{38} (2005), no.~4, 851--868.

\bibitem{Petersen}
P.~Petersen, \emph{Riemannian geometry}, 3 ed., Springer, 2016.

\bibitem{Pitaevskii1961}
L.~P. Pitaevskii, \emph{Vortex lines in an imperfect {B}ose gas}, Sov. Phys.
  JETP \textbf{13} (1961), no.~2, 451–454.

\bibitem{RabinaESAIM2018}
J.~R{\"{a}}bin{\"{a}}, L.~Kettunen, S.~M{\"{o}}nk{\"{o}}l{\"{a}}, and T.~Rossi,
  \emph{Generalized wave propagation problems and discrete exterior calculus},
  ESAIM : Mathematical Modelling and Numerical Analysis \textbf{52} (2018),
  no.~3, 1195--1218.

\bibitem{Rabina2018}
J.~R{\"{a}}bin{\"{a}}, P.~Kuopanportti, M.~Kivioja, M.~M{\"{o}}tt{\"{o}}nen,
  and T.~Rossi, \emph{Three-dimensional splitting dynamics of giant vortices in
  {B}ose-{E}instein condensates}, Phys. Rev. A. \textbf{98} (2018), no.~2.

\bibitem{Sanders2014}
K.~Sanders, C.~Dappiaggi, and T.-P. Hack, \emph{Electromagnetism, local
  covariance, the {A}haronov-{B}ohm effect and {G}auss' law}, Commun. Math.
  Phys \textbf{328} (2014), 625--667.

\bibitem{Segev-Rodnay}
R.~Segev and G.~Rodnay, \emph{Cauchy's theorem on manifolds}, J. Elasticity
  \textbf{56} (1999), 129--144.

\bibitem{Seiringer2002}
R.~Seiringer, \emph{Gross-{P}itaevskii theory of the rotating {B}ose gas},
  Commun. Math. Phys. \textbf{229} (2002), 491--509.

\bibitem{Stefanov1996}
P.~Stefanov and G.~Vodev, \emph{Neumann resonances in linear elasticity for an
  arbitrary body}, Commun. Math. Phys. \textbf{176} (1996), 645--659.

\bibitem{Tao2004}
T.~Tao and G.~Tian, \emph{A singularity removal theorem for {Y}ang-{M}ills
  fields in higher dimensions}, J. Am. Math. Soc. \textbf{17} (2004), no.~3,
  557--593.

\bibitem{Tarhasaari-Kettunen-Bossavit1999}
T.~Tarhasaari, L.~Kettunen, and A.~Bossavit, \emph{Some realizations of a
  discrete {H}odge operator: A reinterpretation of finite element techniques},
  IEEE Transactions on Magnetics \textbf{35} (1999), no.~3, 1494--7.

\bibitem{Weiland1984}
T.~Weiland, \emph{On the numerical solution of {M}axwell's equations and
  applications in the field of accelerator physics}, Particle Accelerator
  \textbf{15} (1984), 245--292.

\bibitem{Yamaoka_2010}
H.~Yamaoka and T.~Adachi, \emph{Continuum dynamics on a vector bundle for a
  directed medium}, Journal of Physics A: Mathematical and General \textbf{43}
  (2010), no.~32, 325209.

\bibitem{Yang2014}
C.~N. Yang, \emph{The conceptual origins of {M}axwell's equations and gauge
  theory}, Physics today \textbf{67} (2014), no.~11, 45--51.

\bibitem{Yang-Mills1954}
C.~N. Yang and R.~Mills, \emph{Conservation of isotopic spin and isotopic gauge
  invariance}, Phys. Rev. \textbf{96} (1954), no.~1, 191--195.

\bibitem{Yavari2008}
A.~Yavari, \emph{On geometric discretization of elasticity}, J. Math. Phys
  \textbf{49} (2008).

\bibitem{Yee1966}
K.~Yee, \emph{Numerical solution of initial boundary value problems involving
  {M}axwell's equations in isotropic media}, IEEE Transactions on Antennas and
  Propagation \textbf{14} (1966), 302--307.

\end{thebibliography}

\end{document}